\newtheorem{theorem}{Theorem}[section]
\newtheorem{proposition}[theorem]{Proposition}
\theoremstyle{definition}
\newtheorem{definition}[theorem]{Definition}
\newtheorem{remark}[theorem]{Remark}
\numberwithin{equation}{section}
\title[Additivity of Witten index]{On a conjectured property of the Witten index and an application to Levinson's theorem}
\author{Alan Carey}
\address{School of Mathematics and Applied Statistics, University of Wollongong, NSW, Australia}
\email{alan.carey@gmail.com}
\author{Galina Levitina}
\address{Mathematical
Sciences Institute, Australian National University, Canberra, ACT, Australia}
\email{galina.levitina@anu.edu.au}
\subjclass[2010]{47A53, 58J30, 47A10, 47A40}
\DeclareMathOperator{\tr}{tr}     
\newcommand{\D}{\mathcal{D}}  
\renewcommand{\H}{\mathcal{H}}  
\renewcommand{\L}{\mathcal{L}} 
\newcommand{\N}{\mathbb{N}}   
\newcommand{\R}{\mathbb{R}}   
\newcommand{\Z}{\mathbb{Z}}   
\newcommand{\bsD}{{\boldsymbol{D}}}
\DeclareMathOperator{\iindex}{index}
\DeclareMathOperator{\dom}{dom}
\begin{document}

\begin{abstract}
A few years ago Fritz Gesztesy raised the issue of whether there was a composition rule for the Witten index analogous to that satisfied by Fredholm operators. In this note we prove a result in this direction and provide an application to Levinson's theorem.
\end{abstract}

\dedicatory{To Fritz Gesztesy, our friend and collaborator}

\maketitle

\section{Introduction}

In \cite{Witten}, Witten introduced a number, termed later the Witten index, which counts the difference in the number of bosonic and fermionic zero-energy modes of a Hamiltonian. In  \cite{BGGSS87}, \cite{BGGS87}, \cite{GS88} this  has  been put into a mathematical framework using two different regularisations (see \eqref{def_WI} for the semigroup regularisation). It was shown that the Witten index is a natural substitute for the Fredholm index of an operator when the operator ceases to be Fredholm.

This note is an application of results described in our Lecture Notes volume \cite{CL}. In \cite{CL} we extended the investigation begun in
\cite{Review} on trace formulas for non-Fredholm operators to the general case required by the study of massless Dirac operators in all dimensions that is 
explained in \cite{CGL+16b]}.
While \cite{CL}  is partly a review of previous progress on developing an index theory for 
non-Fredholm operators it contains a proof of the principal trace formula  (see \eqref{ch_principla trace formula_intro_formula}) expressing the Witten index as an integral of a one form on a Banach manifold of perturbations of a fixed unbounded non-Fredholm self-adjoint operator. This fact leads to a composition rule for the Witten index.

\subsection{Motivation}

The product formula for the Fredholm index states that if $T_1$ and $T_2$ are Fredholm operators then $T_1T_2$ is Fredholm and  the index of $T_1T_2$ is the sum: index$( T_1)$ + index$(T_2)$. 
In this note we use our previous work on the `principal trace formula'  (PTF) and `generalised spectral flow'  \cite{CL} to derive an analogous composition rule for the Witten index. This  question
was suggested by Fritz Gesztesy.  The point raised by Fritz is that the Witten index reduces to the Fredholm index when its definition is applied
to Fredholm operators and thus it is reasonable to ask what properties of the latter extend to the former.
 It is already known that although the topological properties of the Fredholm index are  different from those of the Witten index, the Witten index does posses stability properties with respect to additive perturbations  that satisfies an appropriate relative trace-class assumption  \cite{GS88}. Here we focus only on a composition rule for the Witten index. 

We discuss  in the second half of this note the relevance of our result to a recent paper \cite{AR} on Levinson's theorem in scattering theory and its topological interpretation as discovered in \cite{KR_2006},\cite{KR}.
This application emerged from discussions about \cite{AR}  with the authors.
We also illustrate  some other peculiarities of the Witten index including the fact that, in a one dimensional  example, it interpolates in a `continuous' fashion between the integral values of the Fredholm index of  Wiener-Hopf operators. 

We take up this idea in the final Section where we find an operator with fractional Witten index for which the composition rule for the Witten index relates it to the Fredholm index.
This is a special example of our composition property of the Witten index using our earlier work \cite{Review}

The main result of the paper is in Section \ref{sec_WI} where we discuss the principal trace formula and explain the composition rule for the Witten index. A stronger result holds in the special case that is applicable to the scattering theory example in the final Section. This application was suggested by the existence of an index pairing interpretation of Levinson's theorem  discovered in \cite{AR}.

\subsection{An outline of this note}
The exposition starts with an elementary motivating example using (generalised) Toeplitz operators with discontinuous symbol.
We link this in subsection \ref{sub_preview} to our previous work on the Witten index and its connection to the Gohberg-Krein theorem \cite{Gohberg_Krein}, \cite{RD}.
In the case  of Section \ref{sec_Toeplitz} where we have operators on $L^2(\S^1)$ the method allows us to construct a Fredholm problem from a non-Fredholm one arising from a discontinuous symbol.
On $L^2(\mathbb R)$ this example translates to a problem of dealing with a Wiener-Hopf symbol with bad asymptotics at infinity.
This Wiener-Hopf operator has a Witten index.  We use it as an elementary example of the more general results in the final two Sections.

In Section \ref{sec_WI}  we present the composition rule for the Witten index in two instances. The first is quite general but has an additional technical assumption, which can be removed in the one-dimensional setting as required in Section \ref{sec_Levinson}. In the second 
instance where we work with what we call the `commutative case'
we find that the  Witten index behaves in an analogous fashion, under this composition rule, to the Fredholm index.

We now summarise the notations used throughout this note.  For a Hilbert space $\H$ we denote by $\L(\H)$ the algebra of all bounded operator and by $\L_1(\H)$ the ideal of all trace-class operators. The standard trace on $\L_1(\H)$ is denoted by $\tr$. For two (possibly unbounded) operator $A,B$ we denote by $[A,B]$ their commutator $AB-BA$, when it is clear how to deal with the domains of the operators. For an integer $k\in\N$ we denote by $[A,B]^{(k)}$ the $k$-th repeated commutator $[A,\dots,[A,[A,B]]]$.

The Hilbert space of all square-integrable functions on a measure space $(\Omega,\Sigma,\mu)$ is denoted by $L^2(\Omega)$. The space of all essentially bounded functions on $(\Omega,\Sigma,\mu)$ is denoted by $L^\infty(\Omega)$. The notation $C^\infty(\R)$ (respectively, $C_b^\infty(\R)$) stands for the space of all (respectively, bounded) functions on $\R$ that are  differentiable infinitely many times.

\subsection*{Acknowledgements.}  We thank Angus Alexander, Adam Rennie and Serge Richard for many discussions on Levinson's theorem.

\section{An example of a Toeplitz operator with discontinuous symbol}\label{sec_Toeplitz}

In this Section we give an example of a (generalised) Toeplitz operator with discontinuous symbol and discuss how its (Fredholm) index can be computed via a formula for  the Witten index.\footnote{The motivation for this example stems from a communication from Harold Grosse
many years ago in which he asked  about the significance of gauge transformations of classical spinor fields that did not define Toeplitz operators and 
consequently did not extend to symmetries of fermionic quantum field theories. More precisely, he asked about automorphisms of the algebra of the canonical anti-commutation relations that were not implementable in representations of this algebra of physical interest.  
The discussion in this initial Section of our paper suggests a
way forward to further explore this Grosse Question though we will
not go into that here.}
For background on Toeplitz and Wiener-Hopf operators see \cite{RD}. 

Following \cite{DS} for a Hilbert space $\H$, an orthogonal projection $P$ on $\H$, and a bounded operator $A$ on $\H$ an operator of the form $PAP$ is said to be a (generalised) Toeplitz operator. The classical Toeplitz operator $T_a=PM_aP$ is recovered when $\H$ is $L^2(\mathbb{S})$, $P$ is the Hardy projection onto the Hardy space $H^2(\mathbb{S})$ and $M_a$ is the multiplication operator on $L^2(\mathbb{S})$ by a function $a$.
A classical result of Gohberg and Krein  gives a necessary and sufficient conditions for a Toeplitz operator $T_a=PM_aP$ with a continuous symbol $a$ to be Fredholm. We now present an example of a (generalised) Toeplitz operator with discontinuous symbol which is Fredholm. 
We consider $L^2[-\pi,\pi]$ in two ways, as the completion of the continuous periodic functions on the circle, denoted $\mathcal H_1$ and
as the completion of the antiperiodic functions, denoted $\mathcal H_2$. 
 The functions $\{e_n, n\in \mathbb Z\} $ given by $e_n(\theta)=\frac{1}{\sqrt{2\pi}} e^{in\theta}, \theta\in[-\pi,\pi],$ form an orthonormal basis of $\mathcal{H}_1$ and the functions $e_{n+1/2}, n\in \mathbb Z$ with $e_{n+1/2}(\theta)= \frac{1}{\sqrt{2\pi}}e^{i(n+1/2)\theta},\theta\in[-\pi,\pi],$ 
form an orthonormal basis of $\mathcal{H}_2$.

Let $a(\theta)=e^{i\theta/2}, \theta\in [-\pi,\pi]$ and let 
 $M_{a}$ be the operator from $\mathcal{H}_1$ to $\mathcal{H}_2$ acting by multiplication by the function $a$. We denote by $\mathcal{H}$ the Hilbert space of the direct sum $\mathcal H_1\oplus \mathcal H_2$.  Introduce now an operator $M:\mathcal{H}\to \mathcal{H}$ defined by 
$$M=\begin{pmatrix}
0&M_{a}\\M_{a}&0
\end{pmatrix}.$$
Denoting by $P_i$ the Hardy projection on $\mathcal{H}_i, i=1,2$ (defined as the projection onto the span of the $e_n$ for $n\geq 0$ or onto the span of the $e_{n+1/2}$ for $n\geq 0$ respectively) we also introduce the projection $Q:\mathcal{H}\to\mathcal{H}$ by setting 
$$Q=\begin{pmatrix}P_1&0\\0&P_2\end{pmatrix}.$$

Finally, we introduce now the (generalised) Toeplitz operator $QMQ$ we consider for the example of this section. 

\begin{proposition}\label{prop_example}
The Toeplitz operator $QMQ$ is a Fredholm operator on $Q\mathcal{H}$ of index $-1$. 
\end{proposition}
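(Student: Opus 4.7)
The plan is to write $QMQ$ in block form on $Q\mathcal H = P_1\mathcal H_1 \oplus P_2\mathcal H_2$ and then analyse each off-diagonal block explicitly on the orthonormal Hardy bases. A direct computation on basis vectors shows that multiplication by $a(\theta) = e^{i\theta/2}$ is the \emph{unitary} shift intertwining the two bases:
\[
M_a e_n = e_{n+1/2}, \qquad M_a e_{n+1/2} = e_{n+1},
\]
so the compressions by $Q$ should behave like a unilateral shift, and the only subtlety will be whether the ``missing'' basis vector lies above or below the Hardy cut-off.

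More precisely, I would write
\[
QMQ \;=\; \begin{pmatrix} 0 & T_+ \\ T_- & 0 \end{pmatrix}, \qquad T_+ := P_1 M_a P_2, \quad T_- := P_2 M_a P_1,
\]
reading the top-right $M_a$ as the operator $\mathcal H_2 \to \mathcal H_1$ and the bottom-left as $\mathcal H_1 \to \mathcal H_2$. For every $n \geq 0$ the indices $n+1/2$ and $n+1$ are strictly positive, so the outer Hardy projections do not truncate: $T_-(e_n) = e_{n+1/2}$ identifies $T_-$ with a unitary isomorphism $P_1\mathcal H_1 \to P_2\mathcal H_2$, while $T_+(e_{n+1/2}) = e_{n+1}$ makes $T_+$ an isometry from $P_2\mathcal H_2$ onto the proper subspace $\overline{\mathrm{span}}\{e_n : n\geq 1\} \subset P_1\mathcal H_1$. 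Thus $\mathrm{index}(T_-) = 0$ and $\mathrm{index}(T_+) = -1$, with the cokernel of $T_+$ one-dimensional and spanned by $e_0$.

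Combining the two blocks, any $(u,v)$ in $\ker(QMQ)$ must satisfy $T_+ v = 0$ and $T_- u = 0$, forcing $u = v = 0$ by injectivity of $T_\pm$; and the orthogonal complement of $\mathrm{ran}(QMQ)$ in $Q\mathcal H$ is precisely the one-dimensional span of $(e_0, 0)$. Therefore $QMQ$ is Fredholm with index $0 - 1 = -1$, as claimed. The main obstacle is purely a bookkeeping one: correctly tracking the Hardy cut-offs on $\mathcal H_1$ (which includes $e_0$) versus on $\mathcal H_2$ (whose lowest allowed index is $1/2$), since this asymmetry is exactly what produces the index $-1$ from an otherwise isometric pair of compressions of a unitary operator.
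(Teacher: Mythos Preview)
Your argument is correct. Both you and the paper rely on the same underlying observation, namely that $M_a e_n = e_{n+1/2}$ and $M_a e_{n+1/2} = e_{n+1}$, so that the two compressions $T_-=P_2M_aP_1$ and $T_+=P_1M_aP_2$ are, respectively, a unitary and an isometry with one-dimensional cokernel $\mathbb{C}\,e_0$. The difference lies only in how this is packaged into an index computation. You read off $\ker(QMQ)=0$ and $\mathrm{coker}(QMQ)=\mathbb{C}\,(e_0,0)$ directly from the block decomposition, using implicitly that for an off-diagonal block operator the index is $\iindex(T_+)+\iindex(T_-)$. The paper instead checks that $QM^*Q$ is a parametrix, computes $(QMQ)(QM^*Q)-Q=-\langle\,\cdot\,,e_0\rangle e_0\oplus 0$ and $(QM^*Q)(QMQ)-Q=0$, and then invokes Fedosov's trace formula $\iindex(QMQ)=\tr\big((QMQ)(QM^*Q)-(QM^*Q)(QMQ)\big)=-1$. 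Your route is slightly more elementary in that it avoids the trace formula altogether; the paper's route has the minor advantage of making the parametrix explicit, which is in keeping with the Toeplitz/Wiener--Hopf point of view emphasised elsewhere in the note.
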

\begin{proof}
We firstly show that $QM^*Q$ is a parametrix for $QMQ$ (on $Q\mathcal{H}$). We have that  
\begin{align*}
(QMQ)(QM^*Q)=\begin{pmatrix}
P_1M_{1/2}P_2M_{1/2}^*P_1&0\\0&P_2M_{1/2}P_1M_{1/2}^*P_2
\end{pmatrix},\\
(QM^*Q)(QMQ)=\begin{pmatrix}
P_1M^*_{1/2}P_2M_{1/2}P_1&0\\0&P_2M^*_{1/2}P_1M_{1/2}P_2
\end{pmatrix}.
\end{align*}
It is easy to check that
\begin{align*}
P_1M_{1/2}P_2M_{1/2}^*P_1&=\sum_{n\geq 1}\langle \cdot, e_n\rangle e_n\\
P_1M_{1/2}^*P_2M_{1/2}P_1&=P_1,\\
P_2M_{1/2}P_1M_{1/2}^*P_2&=P_2M_{1/2}^*P_1M_{1/2}P_2=P_2.\\
\end{align*} 
Therefore, 
$$(QMQ)(QM^*Q)-Q=\begin{pmatrix}
-\langle \cdot, e_0\rangle e_0&0\\0&0
\end{pmatrix},\quad (QM^*Q)(QMQ)-Q=0,$$
proving that $QM^*Q$ is a parametrix for $QMQ$. 

Using now Fedosov's formula for the Fredholm index (see e.g.  \cite[Theorem 2.3]{Murphy}) we conclude that 
$$\iindex(QMQ)=\tr\big((QMQ)(QM^*Q)-(QM^*Q)(QMQ)\big)=-1.$$
\end{proof}

 To connect the above example to the Witten index discussed in Section~\ref{sec_WI} we shall need to translate the above Toeplitz operator to a Wiener-Hopf operator on the real line. As originally discovered in \cite{Dev}, this can be done via  the Cayley transform. We will use the calculations in the paper \cite{CHO82} that are based on \cite{Dev}. 
These papers defined an isometry from $L^2$ of the real line to $L^2$ of the circle as follows.   Consider
the change of variable $\theta = 2\arctan x$ for $x\in\mathbb R$. Then define the isometry $W$ by $Wf(x) = f(2\arctan x)/(x-i)$. We can introduce two  orthonormal bases 
$$b_n(x) = e^{-2ni \arctan x}/(x-i)= (-1)^n (x+i)^n/(x-i)^{n+1}$$
and
$$b_{n+1/2}(x) = (-1)^{n+1/2}(x+i)^{n+1/2}/(x-i)^{n+3/2}$$
which correspond under $W$ to the bases $\{e_n\}_{n\in\Z}$ and $\{e_{n+1/2}\}_{n\in\Z}$ introduced above.
We  use them in two isometric copies of $L^2(\mathbb R)$ where the operator giving the isometry 
is $\tilde M_a$ which is multiplication by the function $x\to [(x+i)/(x-i)]^{1/2}$. Then as before we
 have the operator $\tilde M$ being the $2 \times 2$ matrix with $\tilde M_a$ in the off diagonal places which acts in the obvious fashion analogously to the definition above.

 The projection $Q$ maps to the projection $\tilde Q$ under $W$ and the properties of $\tilde Q \tilde M\tilde Q$ as a Fredholm operator of index $-1$ follow from those of $QMQ$ in Proposition \ref{prop_example} above.\footnote{
There is a way to think about this construction geometrically.
Taking square roots means considering a double cover of the circle or, to  incorporate the Wiener-Hopf case, working with a double cover of the Riemann sphere.
Moreover there is an analogue of the previous discussion where one takes an $n$-fold cover through considering $n^{th}$ roots. Then we use an $n\times n$ matrix of operators to play the role of $M$. This viewpoint is connected to an approach to the construction of anyons using a modification of the method in
\cite{CLa}. }
 \subsection{Preview}\label{sub_preview}
 
 We now briefly foreshadow the discussion in the next Section using the elementary example described above.
 If $u$ is a unitary matrix valued function on $\mathbb R$, say $u(x)= e^{-ig(x)}$ then associated to the pair $\frac{d}{idx}$,
$u(x)^*\frac{d}{idx} u(x) =\frac{d}{idx}+g'(x)$ there is an associated Witten index problem (explained in general in the next Section)
provided  the function $g'$ satisfies certain decay assumptions. Our results show that for this pair the Witten index
is given by $\frac{1}{2\pi}\int_{\mathbb R} g'(x)dx$. If $g'$ can be normalised so that $u(x)\to 1$ as $x\to\pm\infty$ then by the Gohberg-Krein theorem \cite{RD}, the operator $PM_uP$, where $P$ is the Hardy projection, is Fredholm. In this case, its Witten index is equal to the Fredholm index \cite{GS88}.
Starting from such a normalised function $g'$ we can scale by $\mu\in\mathbb R$ and setting $u_\mu(x)= e^{-i \mu g(x)}$ we find that the Witten index
for the pair $(\frac{d}{idx}$,
$u_\mu(x)^*\frac{d}{idx} u_\mu(x))$, or equivalently the pair $(\frac{d}{idx},\frac{d}{idx}+\mu g'(x))$, is $ \frac{\mu}{2\pi}\int_{\mathbb R}g'(x)dx$ implying that the Witten index is additive under composition
of unitaries $u_\mu$ and $u_{\mu'}$.

Thus in the example above where we start with $g$ being the arctan function we obtain the Witten index as $1/2$ so that multiplying two such functions gives
the Fredholm index $1$ of the corresponding Toeplitz operator.
 
 \section{A composition rule for the Witten index.}
 \label{sec_WI}

This Section contains the main result of this note. We show that the Witten index for a specific family of operators satisfies a composition formula similar to the Fredholm index. 
The exposition relies on notation and background from our earlier work \cite{CL}.

We introduce the setting for the upcoming discussion of the Witten index.

\begin{definition}Suppose that $A$ is a (in general,  unbounded) self-adjoint operator in a complex Hilbert space $\H$ and let $p\in \N\cup\{0\}$. A bounded self-adjoint operator $B$ on $\H$ is said to be 
\begin{enumerate}
\item a $p$-relative trace-class perturbation with respect to $A$ if $$B(A+i)^{-p-1}\in\L_1(\H).$$
\item $2p$-smooth with respect to $A$ if $B\in \bigcap_{k=1}^{2p}\dom(L_{A^2}^k),$ where the mapping $L_{A^2}^k$ is defined as follows 
\begin{equation}\label{def_L}
L_{A^2}^k(T)=\overline{(1+A^2)^{-k/2}[A^2,T]^{(k)}}
\end{equation} 
 with the domain
\begin{align*}
\dom(L_{A^2}^k)=\{ T\in\L(\H): T\dom(A^j)\subset \dom(A^j), \, j=1,\dots, 2k\\
\text{ 
and the operator } (1+A^2)^{-k/2}[A^2,T]^{(k)} \text{ defined on }\dom(A^{2k})\\
\text{ extends to a bounded operator on } \H\}.
\end{align*}
\end{enumerate}

\end{definition}

The main example we have to date \cite{CL}, \cite{CGL+22} starts with $A$ being
the flat $d$-dimensional space Dirac operator $\D$ acting in $L_2(\mathbb R^d) \otimes \mathbb C^{n(d)}, n(d)=2^{\lceil \frac{d}{2}\rceil},$ and the perturbation $B$ being  given by the multiplication operator by a smooth, $n(d) \times n(d)$ matrix-valued bounded function 
$$F:\mathbb R^d\to M^{n(d) \times n(d)}(L_{\infty}(\mathbb R) \cap C_b^{\infty}(\mathbb R)).$$
Here,  $\lceil \cdot \rceil$ is the  `ceiling function', that is, $\lceil x \rceil$ means the smallest integer larger than or equal to $x \in \R$.
Under suitable decay conditions at infinity for $F$, $B$  is a $p$-relative trace-class perturbation of $\D$  for $p>d$ and no smaller value of $p$. The smoothness assumption on $F$ guarantees that  the perturbation $B$ is $2p$-smooth with respect to $\D$ for any $p\in\N$.

To discuss the connection to index theory, we now fix a self-adjoint operator $A_1$ and assume that $B$ is a $2p$-smooth, $p$-relative trace-class perturbation with respect to $A_1$. We set $A_2=A_1+B$ which is also a self-adjoint operator, since $B$ is a bounded self-adjoint perturbation of $A_1$.
 We introduce   the `suspension' operator $\bsD_{A_1,A_2}$ for the pair $(A_1, A_2)$ as in \cite{APSIII}, \cite{RS95}.
Suppose that a positive function $\theta$ on $\R$ satisfies
\begin{align}\label{theta}
\begin{split}
\theta\in C_b^\infty(\R),& \quad \theta'\in L_1(\R),\\
\lim_{t\to-\infty}\theta(t)=0,&\quad \lim_{t\to+\infty}\theta(t)=1.
\end{split}
\end{align}

Denoting by $M_\theta$ the operator on $L_2(\mathbb R)$ of multiplication by $\theta$, we introduce the operator $\bsD_{A_1,A_2}$ in the Hilbert space $L_2(\mathbb R)\otimes \mathcal H$ by 
\begin{equation}\label{def_D_A}
\bsD_{A_1,A_2}=\frac{d}{dt}\otimes 1+ 1\otimes A_1 +M_\theta\otimes B.
\end{equation} 
Here the operator $d/dt$ in $L_2(\mathbb R)$  is the differentiation operator with domain being the Sobolev space $W^{1,2} \big(\mathbb R)$,
so that $\bsD_{A_1,A_2}$ is defined on $W^{1,2} \big(\mathbb R)\otimes \dom(A_1)$. For ease of notation we will usually identify 
$L_2(\mathbb R)\otimes \mathcal H$ with the Hilbert space  $L_2(\R,\H)$ of all $\H$-valued Bochner square-integrable functions on $\R$.

Next, we recall the definition of the (semigroup regularised) Witten index. Let $T$ be a closed, linear, densely defined operator in $\H$. 
Suppose that for some $t_0 > 0$  
\begin{equation*}
\big[e^{- t_0 T^* T} - e^{- t_0 TT^*}\big] \in \L_1(\H).   
\end{equation*} 
Then $\big(e^{-t T^*T} - e^{-t TT^*}\big) \in \L_1(\H)$ for all $t >t_0$ (see e.g. \cite[Lemma 3.1]{CGPST_Witten}).
The (semigroup regularized) \emph{Witten index} $W_s(T)$ of $T$ is defined by  
\begin{equation}\label{def_WI}
W_s(T) = \lim_{t \uparrow \infty}\tr\big(e^{-t T^*T} - e^{-t TT^*}\big) ,   
\end{equation}
whenever this limit exists.

The main result of \cite{CLPS} (see also \cite[Theorem 6.2.2]{CL}) is the so-called principal trace formula, which holds under the assumptions on $B$ as above.  Namely,  assuming that $B$ is  a $2p$-smooth, $p$-relative trace-class perturbation with respect to $A_1$, for any $t>0$ we proved the following relation:
\begin{align}\label{ch_principla trace formula_intro_formula}
\tr\Big(e^{-t\bsD_{A_1,A_2} \bsD_{A_1,A_2}^{*}}-e^{-t\bsD_{A_1,A_2}^{*} \bsD_{A_1,A_2}^{}}\Big)=-\Big(\frac{t}{\pi}\Big)^{1/2}\int_1^2\tr\Big(e^{-tA_s^2}B\Big)ds,\\
 \quad A_s=A_1+(s-1)B, \quad s\in[1,2],\nonumber
\end{align}
noting that our hypotheses guarantee both sides of the relation are well-defined. Since the right-hand side of the principal trace formula \eqref{ch_principla trace formula_intro_formula} depends only on the endpoints $A_1$ and $A_2=A_1+B$, it follows that (if it exists) the Witten index of the operator $\bsD_{A_1,A_2}$ depends only on the endpoints $A_1$ and $A_2$ and does not depend on the choice of the connection function $\theta$ in \eqref{theta}. For this reason, we introduce the following notion. 

\begin{definition}\label{Witten_index_pair}
Let $A_i,$ $i=1,2$, be self-adjoint operators in $\H$ with common dense domain. Let  $B:=A_2-A_1$ be  a $2p$-smooth, $p$-relative trace-class perturbation with respect to $A_1$ for some $p\in\N\cup\{0\}$. The \emph{Witten index for the pair $(A_1,A_2)$} (denoted by $W(A_1,A_2)$) is defined as
\begin{align*}
 W(A_1,A_2)&= W_s(\bsD_{A_1,A_2})
\end{align*}
where the operator $\bsD_{A_1,A_2}$ is defined as in \eqref{def_D_A}. 
\end{definition}
By \eqref{ch_principla trace formula_intro_formula} under the same assumption as in Definition \ref{Witten_index_pair},  we have that 
\begin{align}\label{WI_pair_ptf}
 W(A_1,A_2)&=-\lim_{t\to\infty}\Big(\frac{t}{\pi}\Big)^{1/2}\int_1^2\tr\Big(e^{-tA_s^2}B\Big)ds, 
\end{align}
where $A_s=A_1+(s-1)B, \quad s\in[1,2].$

\begin{remark}
Note that for $p=0$ or $p=1$, the principal trace formula \eqref{ch_principla trace formula_intro_formula} holds without the assumption of $2p$-smoothness of the perturbation $B$.  Indeed, this assumption is necessary only in the proof of \cite[Theorem 3.2.6]{CL} for convergence of the difference of higher powers of resolvents of the operators $\bsD_{A_1,A_2}^{}\bsD_{A_1,A_2}^{*}$ and $\bsD_{A_1,A_2}^{*} \bsD_{A_1,A_2}^{}$. For $p=0$ the principal trace formula was proved in \cite[Example B.6 (ii)]{CGPST_Witten}. For $p=1$, one can follow an argument similar to the proof of \cite[Proposition 2.4]{GLMST} to obtain  \cite[Theorem 3.2.6]{CL} for the difference of the resolvents of $\bsD_{A_1,A_2}^{}\bsD_{A_1,A_2}^{*}$ and $\bsD_{A_1,A_2}^{*} \bsD_{A_1,A_2}^{}$.
\end{remark}

Next we present the main result of this Section.
\begin{proposition}\label{prop_WI_sum}
Assume that $A_i$, $i=1,2,3$, are self-adjoint operators in a complex separable Hilbert space $\H$ with common dense domain and let $p\in\N\cup\{0\}$.  Assume that $A_2-A_1$ (respectively, $A_3-A_2$) is a $2p$-smooth, $p$-relative trace-class perturbation with respect to $A_1$ (respectively, with respect to $A_2$). If Witten indices for the pairs $(A_1,A_2)$ and $(A_2,A_3)$ exist, then the Witten index for the pair $(A_1,A_3)$ exists and 
$$W(A_1,A_2)+ W(A_2,A_3)=W(A_1, A_3),$$
or equivalently, 
$$W_s(\mathbf D_{A_1,A_2})+ W_s(\mathbf D_{A_2,A_3})=W_s(\mathbf D_{A_1, A_3}).$$
\end{proposition}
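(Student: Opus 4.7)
My plan is to use the Principal Trace Formula \eqref{WI_pair_ptf} to reduce the composition rule to path independence of the integral $\int \tr(e^{-t A(s)^2} \dot A(s))\,ds$ over straight-line paths in the Banach manifold of admissible perturbations, as foreshadowed in the introduction where the authors describe the Witten index as the integral of a one-form.

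First I would verify that the hypotheses of Definition \ref{Witten_index_pair} are satisfied for the pair $(A_1, A_3)$, i.e.\ that $A_3 - A_1$ is a $2p$-smooth, $p$-relative trace-class perturbation of $A_1$. Writing $A_3 - A_1 = (A_2 - A_1) + (A_3 - A_2)$ this reduces to showing that $A_3 - A_2$ retains the $p$-relative trace-class and $2p$-smoothness properties when the reference operator is switched from $A_2$ to $A_1$. Since $A_2 - A_1$ is a bounded self-adjoint operator, iterating the resolvent identity $(A_2 + i)^{-1} - (A_1 + i)^{-1} = -(A_2+i)^{-1}(A_2-A_1)(A_1+i)^{-1}$ $p+1$ times, together with the $p$-relative trace-class property of $A_2 - A_1$ w.r.t.\ $A_1$, lets me transfer the $p$-relative trace-class property of $A_3 - A_2$ from $A_2$ to $A_1$; $2p$-smoothness is handled similarly using the expansion of repeated commutators.

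Next, I would apply the PTF to each pair and reparametrize. Setting $B = A_2 - A_1$, $B' = A_3 - A_2$, and concatenating the two straight-line paths, one obtains the piecewise-linear path $\gamma: [1,3] \to \{A_1 + \mathcal V\}$ with $\gamma(s) = A_1 + (s-1)B$ on $[1,2]$ and $\gamma(s) = A_2 + (s-2)B'$ on $[2,3]$, satisfying $\gamma(1) = A_1$, $\gamma(3) = A_3$. A simple change of variable shows
\[
W(A_1,A_2) + W(A_2,A_3) = -\lim_{t \to \infty}\Big(\tfrac{t}{\pi}\Big)^{1/2}\!\int_1^3 \tr\big(e^{-t\gamma(s)^2}\dot\gamma(s)\big)\,ds.
\]
On the other hand, $W(A_1,A_3)$ is given by the analogous integral over the straight-line path $\tilde\gamma(s) = A_1 + (s-1)(B+B')$, $s \in [1,2]$, after a further rescaling.

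The main step is then to prove, \emph{for each fixed $t > 0$}, that the two integrals agree. This is exactly path independence of the one-form $\omega_t(V)_A := \tr(e^{-tA^2}V)$ on the affine Banach manifold of perturbations. Closedness of $\omega_t$ amounts to showing $D_W \tr(e^{-tA^2}V) = D_V \tr(e^{-tA^2}W)$ for admissible $V, W$; expanding each side via Duhamel's formula $D_W e^{-tA^2} = -\int_0^t e^{-(t-\tau)A^2}(AW+WA)e^{-\tau A^2}\,d\tau$ and using cyclicity of the trace together with $[A, e^{-\tau A^2}]=0$ yields the identity. The $2p$-smoothness and $p$-relative trace-class hypotheses ensure that all operators appearing inside the trace are in $\L_1(\H)$ and all rearrangements are legitimate; this is where I expect the main technical obstacle to lie, and it is also where the authors' framework from \cite{CL} will be crucial. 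Since the straight-line and concatenated piecewise-linear paths share endpoints in the affine space, closedness of $\omega_t$ gives the desired equality of integrals for every $t$.

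Finally, taking $t \to \infty$: by hypothesis the limits defining $W(A_1,A_2)$ and $W(A_2,A_3)$ exist and sum to the limit of the concatenated-path integral. The equality at each finite $t$ then forces the limit for the straight-line path to exist as well and equal this sum, which is precisely $W(A_1, A_3)$, completing the proof.
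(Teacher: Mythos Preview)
Your proposal is correct and follows essentially the same route as the paper: verify that $A_3-A_1$ is a $2p$-smooth, $p$-relative trace-class perturbation of $A_1$, invoke the principal trace formula for all three pairs, and then use path independence of the one-form $A\mapsto \tr(e^{-tA^2}\,\cdot\,)$ to equate the straight-line integral from $A_1$ to $A_3$ with the concatenated one through $A_2$. The only difference is packaging: the paper cites \cite[Theorem 3.1.7]{CL} and \cite[Proposition 2.9]{CGPRS} for the hypothesis transfer and \cite[Theorem 5.3.8]{CL} for path independence, whereas you sketch these steps directly (resolvent identity for the trace-class transfer, Duhamel plus cyclicity for closedness of $\omega_t$).
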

\begin{proof}
For simplicity we write $B_1=A_2-A_1$, $B_2=A_3-A_2$, and $B_3=A_3-A_1$. By assumption, $B_i$ is a $2p$-smooth, $p$-relative trace class perturbation with respect to $A_i$ for $i=1,2$. We claim that $B_3$ is also a $2p$-smooth, $p$-relative trace-class perturbation with respect to $A_1$.

We firstly show that $B_3(A_1-i)^{-p-1}$ is a trace class operator. Noting that $B_3=B_1+B_2$ we have 
\begin{align*}
B_3(A_1-i)^{-p-1}&=B_1(A_1-i)^{-p-1}+B_2(A_1-i)^{-p-1}\\
&=B_1(A_1-i)^{-p-1}+B_2(A_2-i)^{-p-1}\\
&\quad\quad+B_2\Big((A_1-i)^{-p-1}-(A_2-i)^{-p-1}\Big).
\end{align*} 
By assumption, the first and the second terms above are trace-class. By \cite[Theorem 3.1.7]{CL} the third term is also trace-class. Thus, $B_3$ is a $p$-relative trace-class perturbation with respect to $A_1$. 

By an argument similar to the proof of \cite[Proposition 2.9]{CGPRS}, one can show that $B_3$ is a $2p$-smooth perturbation with respect to $A_1$. By \eqref{WI_pair_ptf}, we have that
\begin{equation}\label{eq1}
W(A_1,A_3)=-\lim_{t\to\infty}\Big(\frac{t}{\pi}\Big)^{1/2}\int_1^2\tr\Big(e^{-t(A_1+(s-1)B_3)^2}B_3\Big)ds.
\end{equation} 
Since $B_1$ and $B_3$ are $p$-relative trace-class perturbations with respect to $A_1$, it follows that $B_2=B_3-B_1$ is also a $p$-relative trace-class perturbation of $A_1$. We can consider two paths joining $A_1$ and $A_3$ with the first path being $\{A_1+(s-1)B_3\}, s\in[1,2],$ and the second path consisting of two pieces $\{A_1+(s-1)B_1\}$, $s\in[1,2]$ and $\{A_1+B_1+(s-1)B_2\}, s\in[1,2]$. By \cite[Theorem 5.3.8]{CL} the integral on the right hand-side is independent of the choice of these two paths and 

$$\int_1^2\tr\Big(e^{-t(A_1+(s-1)B_3)^2}B_3\Big)ds$$
\begin{equation}\label{eq2}
=\int_1^2\tr\Big(e^{-t(A_1+(s-1)B_1)^2}B_1\Big)ds+\int_1^2\tr\Big(e^{-t(A_1+B_1+(s-1)B_2)^2}B_2\Big)ds.
\end{equation} 
By \eqref{WI_pair_ptf} we have that 
$$W(A_1,A_2)=-\lim_{t\to\infty}\Big(\frac{t}{\pi}\Big)^{1/2}\int_1^2\tr\Big(e^{-t(A_1+(s-1)B_1)^2}B_1\Big)ds$$
and 
$$W(A_2,A_3)=-\lim_{t\to\infty}\Big(\frac{t}{\pi}\Big)^{1/2}\int_1^2\tr\Big(e^{-t(A_2+(s-1)B_2)^2}B_2\Big)ds.$$
Combining these two equalities with \eqref{eq1} and \eqref{eq2} and recalling that $A_1+B_1=A_2$, we conclude that 
$$W(A_1,A_2)+ W(A_2,A_3)=W(A_1, A_3).$$
\end{proof}

\subsection{A one-dimensional example of the additivity property  of the Witten index}

We now specialise to the situation that arises in the scattering example of the final Section below.
In our paper \cite{Review} on trace formulas for a $1+1$-dimensional operator we considered the pair $A_1=\frac{d}{idx}$
as a self-adjoint operator  on $L^2(\mathbb R,  \mathbb C^d)$ and $A_2= UA_1U^* =A_1+\Phi$ where $U$ is 
a unitary operator on $L^2(\mathbb R,  \mathbb C^d)$ and where $\Phi$ is a $d\times d$ matrix valued function that satisfies with respect to $A_1$ a $p$-relative trace-class perturbation for $p=1$, that is $\Phi(1+A_1^2)^{-1}$ is trace class.
If $U$ is given by a function $U(x)=\exp{i\int_{x_0}^x \Phi(y) dy}$ we call this the commutative case. 
Note that in the  general case studied in \cite{Review}, the perturbation $\Phi$ is such that
we construct a unitary $U$  that conjugates $A_1$ to $A_2$ using the so-called Dyson expansion.  Here we consider only the case where this construction collapses
 to this simple exponential form.

We showed in \cite{Review} that the Witten index $W(A_1, U^*A_1U)$ associated to this pair exists and is given by
\begin{equation}\label{WI-integral}
W(A_1, U^*A_1U)=\int _{\mathbb R}\tr(U^*[A_1,U] )=\frac{1}{2\pi} \int_{\mathbb R} \tr(\Phi(x))dx.
\end{equation}
 
 Assume $V$ is a second unitary operator of the same form as $U$ associated with a function $\Psi$. Then, using again \cite{Review} we have that 
 $$W(A_1, V^*A_1V)=\int _{\mathbb R}\tr(V^*[A_1,V] )=\frac{1}{2\pi} \int_{\mathbb R} \tr(\Psi(x))dx.$$   
 
 Consider now the pair $A_1, V^*U^*A_1UV$. The elementary algebraic identity
 \begin{align*}
 \tr (V^*U^*[A_1, UV])&= \tr(V^*U^* [A_1, U]V + V^*U^*U[A_1,V])\\
 &= \tr(U^*[A_1,U]) +\tr(V^*[A_1,V])
 \end{align*}
shows that 
$$W(A_1, V^*U^*A_1UV)=W(A_1, U^*A_1U)+W(A_1, V^*A_1V).$$
This gives a very simple proof of an addition formula for Witten indices specialising that  in Proposition \ref{prop_WI_sum} in this one dimensional situation.

\section{An application for Levinson's theorem}
\label{sec_Levinson}

We describe in this Section an application of the additivity result for the Witten index to a question about Levinson's theorem for Schr\"odinger operators raised in \cite{AR}. We refer the reader to \cite{Yafaev} for basics of scattering theory relevant to our discussion below. 
For  the one dimensional Schr\"odinger scattering considered in this Section we refer the reader to \cite[Chapter 5]{Yafaev_II} and \cite{BGW} for further details. 

Let $H_0=-\frac{d^2}{dx^2}$ be the Laplacian on the real line $\R$ and let the interacting Hamiltonian be $H:=H_0+V$ with $V$ a fast decreasing potential function on
$\mathbb R$.  Then the scattering operator $S$ in the spectral representation of $H_0$ is a $2 \times 2$ unitary matrix valued function of the spectral variable $\lambda\in \R^+:=[0,\infty)$ (see \cite[Section 5.1.4]{Yafaev_II}).
In \cite{Levinson} Levinson showed that the number of  eigenvalues $N$ (counted with multiplicity) of the operator $H$  was related to the scattering operator $S$ by the formula
\begin{equation}\label{eq_Levinson}
N=\frac1\pi\Big(\delta(\infty)-\delta(0)\Big)+\frac12 M_R(0),
\end{equation}
where $\delta$ is given by the argument of the determinant of $S$ and $M_R(0)$ takes either the value zero or one.  The correction term $M_R(0)$ corresponds to the existence of resonancnes of $H$, which are  the solutions
the equation $H\psi=0$ with $\psi\notin L_2(\R).$

Following ideas of \cite{KR} the paper \cite{AR} formulates Levinson's  theorem as an index computation for a generalised Toeplitz operator (in the sense of \cite{DS}) constructed from $S$.
They observe that this formulation is possible firstly because $S-1$ as a function of the spectral variable $\lambda$ converges to zero as $\lambda\to\infty$ 
and secondly because they restrict to the case where $S(0)$ is the identity operator.

We will now show how the latter restriction can be removed via the use of the Witten index and the additivity formula in Proposition \ref{prop_WI_sum}.
 In order to use our results on the Witten index in this application we need to transform the generalised Toeplitz 
operator of \cite{AR} using an isometry of $L^2(\mathbb R^+, \mathbb C^2)$ with $L^2(\mathbb R, \mathbb C^2)$ defined by the exponential function $\exp: \mathbb R\to \mathbb R^+$.
This isometry takes the generator of dilations on $L^2(\mathbb R^+, \mathbb C^2)$ (which is used in \cite{KR} to give an expression for the wave operator) to the generator of translations, namely $d/id\lambda$ on $L^2(\mathbb R, \mathbb C^2)$. This reformulation of the situation in \cite{KR} on $\mathbb R$ then enables us to connect with our results on the Witten index.

Now, in the paper \cite{AR}, Levinson's theorem is formulated in such a way as to identify the number of bound states for $H_0+V$ as equal to the index of the generalised
Toeplitz operator $PSP$ where $P$ is the Hardy projection on $L^2(\mathbb R, \mathbb C^2)$ and we also use $S$ to denote the image of the scattering operator under the isometry defined via the exponential function on this latter Hilbert space.
The Hardy projection arises naturally in this formulation as it constructs the phase $F=2P-1$ of the operator $D=d/id\lambda$. 

There is a problem identified in \cite{AR} with their approach to Levinson's theorem when the scattering operator is not the identity at $0$ in the spectral representation of $H_0$.  To describe the problem
 we note that the exponential map takes $-\infty$ to $0$.  Thus in the formulation on $\mathbb R$ the difficulty presents itself as the failure of $S(\lambda)-1$ to converge to $0$ as the spectral variable 
 $\lambda\to -\infty$. When this happens it is well known that $PSP$ is not Fredholm.

With the notation as above we now formulate  Levinson's theorem as an index theorem under no restriction on the behaviour of the scattering matrix at $-\infty$. 
\begin{proposition}There exists a $2\times 2$ matrix valued function $\sigma$
such that the operator $PS\sigma^* P$ is Fredholm, with $P$ the Hardy projection. The number of bound states of the operator $H_0+V$ is equal to the Fredholm index
of $PS\sigma^* P$, which is equal to the sum of the Witten indices $W(D,S^*DS)$ and $W(D, \sigma^*D\sigma)$ where 
$$W(D, \sigma^*D\sigma)=\begin{cases}0,& \det(S(-\infty))=1,\\
                                                                                                                                                                                                                                                                                                                                                                                                                                                                                                                                                                                                                                                                                                                                                                                                                                                                                                                                                                                                                                                                                                                                                                                                                                                                                                                                                                                                                                                                                                                                                                                                                                                                                                                                                                                                                                                                                                                                                                                                                                                                                                                                                                                                                                                                                                                                                                                                                                                                                                                                                                                                                                                                                                                                                                                                                                                                                                                                                                                                                                                                                                                                                                                                                                                                                                                                                                                                                                                                                                                                                                                                                                                                                                                                                                                                                                                                                                                                                                                                                                                                                                                                                                                                                    1/2, &\text{otherwise}\end{cases}.$$
\end{proposition}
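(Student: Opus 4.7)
The plan is to construct a smooth unitary $2\times 2$ matrix-valued function $\sigma$ on $\R$ whose role is to absorb the nontrivial value of $S$ at the left end of the spectrum, so that $S\sigma^{*}$ has limit $I$ at both $\pm\infty$. The matrix-valued Gohberg--Krein theorem then delivers the Fredholmness of $PS\sigma^{*}P$, and our composition rule (together with the identification of Witten and Fredholm index in the Fredholm case due to \cite{GS88}) splits the resulting index into the two Witten contributions claimed in the statement.

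For the construction, set $U_{0}:=S(-\infty)$ (a unitary $2\times 2$ matrix) and pick a self-adjoint matrix $G$ with $e^{-iG}=U_{0}$. Choose a smooth $h:\R\to[0,1]$ with $h\equiv 1$ for $x\le -R$ and $h\equiv 0$ for $x\ge R$, and define $\sigma(x):=e^{-ih(x)G}$. Then $\sigma$ is smooth and unitary, $\sigma(-\infty)=U_{0}$, $\sigma(+\infty)=I$, and $\sigma'$ is compactly supported. Consequently $S\sigma^{*}-I$ tends to $0$ at both ends with the decay inherited from the decay of $S-I$ (which in turn comes from the decay of $V$), so the Gohberg--Krein theorem gives that $PS\sigma^{*}P$ is Fredholm on $P L^{2}(\R,\mathbb{C}^{2})$. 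Combining the topological Levinson formulation of \cite{KR} and \cite{AR}, transplanted to $L^{2}(\R,\mathbb{C}^{2})$ via the exponential isometry used earlier in this section, with the observation that the insertion of $\sigma^{*}$ precisely cures the zero-energy anomaly that forced the restriction $S(0)=I$ in \cite{AR}, the Fredholm index of $PS\sigma^{*}P$ then equals the number $N$ of bound states of $H_{0}+V$.

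For the middle equality, \cite{GS88} gives that when $PS\sigma^{*}P$ is Fredholm its Witten index coincides with its Fredholm index, and this Witten index is exactly $W(D,(S\sigma^{*})^{*}D(S\sigma^{*}))$. Writing $S\sigma^{*}$ as a product of two unitaries and applying the one-dimensional additivity identity from the preceding subsection (or, in full generality, Proposition~\ref{prop_WI_sum}, noting by the remark after \eqref{WI_pair_ptf} that only the $p=1$ hypotheses are needed here) yields
\[
W\bigl(D,(S\sigma^{*})^{*}D(S\sigma^{*})\bigr)\;=\;W(D,S^{*}DS)+W(D,\sigma^{*}D\sigma),
\]
with the sign of the second term fixed by the orientation convention built into $\sigma$. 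For the last claim, formula~\eqref{WI-integral} applied to $\sigma=e^{-ihG}$ gives $W(D,\sigma^{*}D\sigma)=(2\pi)^{-1}\tr G$, and the relation $\det U_{0}=e^{-i\tr G}$ forces $\tr G\in 2\pi\Z$ when $\det S(-\infty)=1$ (choose $\tr G=0$) and $\tr G\in\pi+2\pi\Z$ otherwise (choose $\tr G=\pi$), producing the stated values $0$ and $\tfrac{1}{2}$.

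The main obstacle is the Levinson step: one must show that after the modification $S\mapsto S\sigma^{*}$ the topological Levinson formula of \cite{KR}, \cite{AR} continues to count precisely the bound states $N$, equivalently that the $\tfrac{1}{2}$ contribution carried by $\sigma$ accounts exactly for the resonance correction $\tfrac{1}{2}M_{R}(0)$ in the classical formula \eqref{eq_Levinson}. A secondary and routine check, via the remark after \eqref{WI_pair_ptf}, is that the $1$-relative trace-class condition for each of the three perturbations $S^{*}DS-D$, $\sigma^{*}D\sigma-D$, and $(S\sigma^{*})^{*}D(S\sigma^{*})-D$ follows from the decay of $V$ together with the compact support of $\sigma'$.
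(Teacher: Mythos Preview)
Your overall strategy---build $\sigma$ to repair the limit at $-\infty$, invoke additivity, and compute the correction---matches the paper's, and your explicit computation of $W(D,\sigma^{*}D\sigma)$ via the trace of a generator $G$ is essentially a streamlined version of the paper's case-by-case calculation (the paper uses \cite[Proposition~9]{KR} to pin down the form of $S(-\infty)$ and then writes down concrete arctan interpolants, whereas you diagonalise and choose $\tr G\in\{0,\pi\}$; both reach the same answer). There are, however, two points where your argument diverges from the paper, and the first is a genuine gap.

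The step you flag as the ``main obstacle''---showing that the Fredholm index of $PS\sigma^{*}P$ equals the number $N$ of bound states---is not an \emph{input} in the paper's proof but a \emph{conclusion}. The paper reverses your order of reasoning: it first invokes \cite[equation~(16)]{KR} (equivalently, the classical Levinson formula~\eqref{eq_Levinson}) to identify $W(D,S^{*}DS)=\tfrac{1}{2\pi i}\int_{\R}\tr(S^{*}[D,S])$ with $N$ minus the resonance correction $\tfrac{1}{2}M_{R}(0)$; then it computes $W(D,\sigma D\sigma^{*})$ to be exactly that correction in each case; and only then concludes, via the additivity identity, that $\mathrm{index}(PS\sigma^{*}P)=N$. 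Your route instead assumes the equality $\mathrm{index}(PS\sigma^{*}P)=N$ and deduces the decomposition, which leaves the Levinson identification unproved. Appealing to \cite{AR} directly for $S\sigma^{*}$ does not close this, since $S\sigma^{*}$ is not the scattering operator of any Schr\"odinger problem and the bound-state count in \cite{AR} is tied to the actual wave operators.

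Second, your use of \cite{GS88} to pass from the Fredholm index of $PS\sigma^{*}P$ to $W\bigl(D,(S\sigma^{*})^{*}D(S\sigma^{*})\bigr)$ conflates two different Witten indices: the semigroup-regularised index of the Toeplitz operator $PS\sigma^{*}P$ itself, and the Witten index of the \emph{pair}, which by Definition~\ref{Witten_index_pair} is the Witten index of the suspension operator $\bsD$. These are a priori different objects. The paper makes this identification instead via the noncommutative-geometric local index formula of \cite{CGPRS,CGRS}, which gives $\mathrm{index}(PUP)=\tfrac{1}{2\pi i}\int_{\R}\tr(U^{*}[D,U])$ directly for suitable unitaries $U$, and then recognises the right-hand side as the pair Witten index via \eqref{WI-integral}.
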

\begin{proof}
We start with  the Witten index $W(D, S^*DS)$ for the pair $(D,S^*DS)$. It is claimed \cite{ARen} that $S^*DS-D=S^*[D,S]$ is a $p$-relative trace-class perturbation of $D$ with $p=1$. Hence, by \eqref{WI-integral} we obtain 
the formula $W(D, S^*DS)=\frac{1}{2\pi i}\int_{\mathbb R}\tr(S^*[D,S])$.
 Moreover, \cite[equation (16)]{KR} shows that the Witten index $$W(D, S^*DS)=\frac{1}{2\pi i}\int_{\mathbb R}\tr(S^*[D,S])$$ may differ from an integer (which is the number of bound states of $H_0+V$) by $1/2$ depending on the determinant of $S(-\infty)$. 

Next, we choose $\sigma$  to have limit $S(-\infty)$ as $\lambda\to -\infty$ when $S(-\infty)$ is not the identity and 
to have the identity matrix as its limit as $\lambda\to \infty$.  Then, the  operator $S\sigma^*$ satisfies that  $S\sigma^*(\lambda)\to 1$ as  $\lambda\to \pm\infty$ and is continuous in $\lambda$.
The analysis of  \cite{AR} applies to $S\sigma^*$ and proves that the 
 operator $PS\sigma^*P$ is Fredholm and its index can be related to the number of bound states of $H_0+V$. 
 
By exploiting an index formula from noncommutative geometry  \cite{ARen, CGPRS, CGRS}, we have that 
$${\rm index}(PS\sigma^*P)=\frac{1}{2\pi i}\int_{\mathbb R} \tr(\sigma S^*[D,S\sigma^*]).$$
By \eqref{WI-integral} the right-hand side is the Witten index for the pair $(D, \sigma SDS\sigma^*)$.
Using the  commutator identity from the previous Section we write this as
\begin{align*}
{\rm index}(PS\sigma^*P)&=\frac{1}{2\pi i}\int_{\mathbb R} \tr( S^*[D,S]) + \frac{1}{2\pi i}\int_{\mathbb R} \tr(\sigma [D,\sigma^*])\\
&=W(D, S^*DS)+W(D, \sigma D\sigma^*).
\end{align*}

It remains to show that  $\sigma$ may be chosen so that  the Witten index $W(D,\sigma D\sigma^*)$ is either $0$ or $\frac12$ depending on the determinant of $S(-\infty)$.

Assume firstly, that $\det(S(-\infty))\neq 1.$ Then by \cite[Proposition 9]{KR}, we have that $\det(S(-\infty))=-1$ and  $S(-\infty)=\pm\begin{pmatrix}
1&0\\0&-1
\end{pmatrix}
$.  In this case,  we can set
$$
	\sigma(\lambda)= \pm\begin{pmatrix} 
	1& 0 \\
	0& e^{i(\arctan \lambda -\pi/2)}\\
	\end{pmatrix}, 
	$$
which interpolates between $-1$ at $-\infty$ and $1$ at $\infty$. We can then write $\sigma(\lambda)=\exp(-i \int_\lambda^\infty \begin{pmatrix}
0&0\\0& (1+x^2)^{-1}
\end{pmatrix}dx), \lambda\in \R.$
By \eqref{WI-integral} we have that 
$$W(D, \sigma D\sigma^*)=\frac1{2\pi} \int_\R \tr\begin{pmatrix}
0&0\\0& (1+\lambda^2)^{-1}\end{pmatrix} d\lambda=\frac12.$$

Next, assume that $\det(S(-\infty))=1$. Since $S(-\infty)$ is unitary, it follows that eigenvalues of $S(-\infty)$ are $e^{i\theta}$ and $e^{-i\theta}$ for some $\theta \in [0,2\pi]$. Let $u$ be a unitary matrix,  such that $S(-\infty)=u\begin{pmatrix}
 e^{-i\theta}&0\\0&e^{i\theta}
\end{pmatrix}u^*.$
We define 
$$\sigma(\lambda)=u\begin{pmatrix}e^{i\frac{\theta}{\pi}(\arctan \lambda -\pi/2)}&0\\0&e^{-i\frac{\theta}{\pi}(\arctan \lambda -\pi/2)}
\end{pmatrix}u^*.$$
 Then $\sigma(\pm \infty)=S(\pm \infty)$ and using
equality \eqref{WI-integral} as above we have 
 \begin{align*}
 W(D,\sigma D\sigma^*)=\frac1{2\pi}\int_\R \tr\begin{pmatrix}\frac{\theta}{\pi(\lambda^2+1)}&0\\0&-\frac{\theta}{\pi(\lambda^2+1)} 
 \end{pmatrix}d\lambda=0,
 \end{align*}
  as required. 
\end{proof}

Our introduction of $S\sigma^*$ as a `modified scattering operator'  
	links the Witten index in a special case to \cite{CGPRS}  though exactly how to interpret the  `trick' of introducing a correction factor via $\sigma$
	to compensate for the fact that $S(0)\neq 1$  is currently ad hoc
	and without a theoretical foundation.	
	
	In this Section we have only considered one dimensional scattering for Schr\"odinger operators.  For higher dimensions analogous results require
	an extension of what is currently known about the Witten index. Specifically, the scattering operator $S$ for the higher dimensional case, as a function of the spectral variable for $H_0$, is a unitary operator valued function with $S(\lambda)-1$ in a Schatten class (depending on the dimensionality) when there are no resonances.
	For this case we need to extend the results in \cite{Review}. As our proof of a formula for the Witten index in \cite{Review}  requires us to compute a formula for the spectral shift function
	for the pair $A_-, SA_-S^*$ this extension is likely to be quite difficult.

\end{document}